\documentclass[11pt,a4paper,reqno]{amsart}
 \usepackage{amsaddr}
\usepackage{amscd,amsmath}
\usepackage{amssymb,amsmath,latexsym,color,enumitem,tikz,dsfont,tikz-cd}
\usepackage[all]{xypic}       
\usepackage[varg]{pxfonts}
\usepackage{tabu,adjustbox,booktabs}
\usepackage{graphicx}

\usepackage{hyperref}

\usetikzlibrary{decorations.pathmorphing,shapes}

\usetikzlibrary{decorations.markings}
\tikzset{negated/.style={
        decoration={markings,
            mark= at position 0.5 with {
                \node[transform shape] (tempnode) {$\backslash$};
            }
        },
        postaction={decorate}
    }
}

 \newcommand{\newuparrow}{{{\rlap{$\ $}\hbox{$\uparrow$}}}}
 \newcommand{\twoheaddownarrow}{{\rlap{\rlap{$\ $}\raise .25ex\hbox{$\downarrow$}}\raise-.25ex\hbox{$\downarrow$}}}
 \newcommand{\twoheaduparrow}{{\rlap{\rlap{$\ $}\raise .25ex\hbox{$\uparrow$}}\raise-.25ex\hbox{$\uparrow$}}}

\newcommand{\set}[1]{\{\,#1\,\}}
\newcommand{\bigset}[1]{\bigl\{\,#1\,\bigr\}}
\newcommand{\tbigwedge}{\mathop{\textstyle \bigwedge }}
\newcommand{\tbigcap}{\mathop{\textstyle \bigcap }}
\newcommand{\tbigcup}{\mathop{\textstyle \bigcup }}

\newcommand{\tbigvee}{\mathop{\textstyle \bigvee }}
 
\makeatletter
\newcommand*{\@old@slash}{}\let\@old@slash\slash
\def\slash{\relax\ifmmode\delimiter"502F30E\mathopen{}\else\@old@slash\fi}
\makeatother

\def\R{\mathbb{R}}

\def\N{\mathbb{N}}

\def\SS{\mathsf{S}}

\def\cl{\mathfrak{c}}
\def\op{\mathfrak{o}}

\newtheorem{theorem}{Theorem}[section]
\newtheorem{proposition}[theorem]{Proposition}

\newtheorem{lemma}[theorem]{Lemma}
\newtheorem{corollary}[theorem]{Corollary}

\theoremstyle{definition}

\theoremstyle{remark}
\newtheorem{remark}[theorem]{Remark}

\title{Joins of closed sublocales are not always a coframe}
\author{Igor Arrieta}
\address{Department of Mathematics \\
University of the Basque Country UPV/EHU, 48080, Bilbao, Spain}
\email{igor.arrieta@ehu.eus}
\keywords{Locale, frame, coframe, joins of closed sublocales, weak subfitness}
\subjclass[2020]{18F70 (primary); 06D22, 54D10 (secondary)}
%
%

%

\begin{document}

\maketitle

\begin{abstract}
Given a locale $L$, the ordered collection $\SS_c(L)$ of joins of closed sublocales forms a frame—somewhat unexpectedly, as it is naturally embedded in the coframe of all sublocales of $L$, where by coframe we mean the order-theoretic dual of a frame. This construction has attracted attention in point-free topology: as a maximal essential extension in the category of frames, for its (non-)functorial properties, its relation to canonical extensions and exact filters of frames, etc.

A central open question of the theory, posed by Picado, Pultr, and Tozzi in 2019, asked whether $\SS_c(L)$ is always a coframe, or whether there exists a locale for which this fails. In this paper, we resolve this question in the negative by constructing a locale $L$ such that $\SS_c(L)$ is not a coframe. The main challenge in such question lies in the difficulty of understanding exact infima in $\SS_c(L)$; we circumvent this by analysing  a certain separation property satisfied by $\SS_c(L)$.
\end{abstract}

\section{Introduction}
This  note concerns the family $\SS_c(L)$ of joins of closed sublocales of a locale $L$, introduced by Picado, Pultr, and Tozzi in \cite{PicadoPultrTozzi2019}. They showed that under the weak point-free separation axiom of \emph{subfitness}, the set $\SS_c(L)$ forms a Boolean algebra that coincides with the Booleanization $\SS_b(L)$ of the coframe $\SS(L)$ of all sublocales. The latter was subsequently studied in detail in \cite{Arrieta2022}.

Beyond this setting, the structure of $\SS_c(L)$ has attracted increasing attention in point-free topology. It has been studied for its (non-)functorial properties \cite{Arrieta2022,ball2019}, its role as a maximal essential extension (or envelope) in the category of frames \cite{ball2018}, and as a discretization mechanism for modeling non-continuous real-valued maps on locales \cite{DISCB}. Further applications include point-free counterparts of classical topological properties (see e.g.\ \cite{TBB}), the analysis of certain classes of filters \cite{MoshierPultrSuarez2020}, and connections to canonical extensions of frames \cite{JaklSuarez2025}.

Somewhat surprisingly, one of the main results of \cite{PicadoPultrTozzi2019} is that $\SS_c(L)$ is \emph{always} a frame, even in the absence of subfitness. Moreover, it is naturally embedded as a join-sublattice of the coframe $\SS(L)$. This seemingly paradoxical situation—where a frame sits inside a coframe—has been explained in various ways: for instance, by viewing $\SS_c(L)$ as a subframe of $\SS_b(L)$ (cf.\ \cite{Arrieta2022}) or through general results about exactness of meets, as in \cite{MoshierPicadoPultr2022}; and also fits naturally in the theory of Raney extensions recently initiated by Suarez \cite{SuarezRaneyExtensions2024}.

However, as first pointed out in \cite{PicadoPultrTozzi2019}, it remained an {open problem} whether $\SS_c(L)$ is always a coframe, or whether there exists a locale $L$ for which $\SS_c(L)$ fails to be one. 
This open question has been echoed in subsequent literature—for example, in the work by Jakl, Moshier,  Picado, Pultr, Suarez and others (see e.g. \cite[Notes~5.1.1]{MoshierPultrSuarez2020}),  \cite[IX~6.4.1]{separation} or \cite[\S 10]{JaklSuarez2025}).

Indeed, for all known natural classes of locales, $\SS_c(L)$ satisfied the coframe property: we recall that if $L$ is subfit, then $\SS_c(L)$ is not only a coframe but indeed a Boolean algebra. If $L$ is a $T_D$-spatial locale, then $\SS_c(L)$ is again a coframe. Similarly, if $L$ is both a frame and a coframe, then so is $\SS_c(L)$. These cases suggest that the class of locales for which $\SS_c(L)$ fails to be a coframe is rather restricted.

In this paper, we provide a solution to this problem in the negative: we construct an explicit locale $L$ such that $\SS_c(L)$ is not a coframe. The main challenge in addressing such questions lies in the difficulty of identifying exact meets within $\SS_c(L)$. Rather than tackling this directly, we circumvent it by analysing when $\SS_c(L)$ is weakly subfit, which allows us to detect the failure of coframeness by using an earlier construction by Paseka and \v Smarda.

The organization of the paper is as follows. In Section~\ref{sec2} we provide necessary notation and background on the categories of locales and frames, whereas in Section~\ref{sec3} we state the main result. In Subsection~\ref{subse3} we study the resulting separation axiom in a broader context, which may be of interest in its own right, and outline possible directions for future work.

\section{Preliminaries}\label{sec2}
Our notation and terminology regarding the categories of frames and locales will be that of \cite{PP12} (see also \cite{STONE}). The Heyting operator in a frame $L$, right adjoint to the meet operator, will be denoted by $\to$; for each $a\in L$, $a^*=a\to 0$ is the pseudocomplement of $a$.  An element $a\in L$ is \emph{dense} if $a^{*}=0$. Moreover, the properties $a\leq a^{**}$ and $a^{*}=a^{***}$ hold for any $a\in L$.
 A \emph{sublocale} of a locale $L$ is a subset $S\subseteq L$ closed under arbitrary meets such that
\[\forall a\in L\ \ \forall s\in S \ \ (a\to s\in S).\]
Sublocales of $L$ correspond to the regular subobjects of $L$ in the category of locales.
The system $\SS  (L)$ of all sublocales of $L$, partially ordered by inclusion, is a coframe \cite[Theorem~III.3.2.1]{PP12}, that is, its dual lattice is a frame.  Infima and suprema are given by
\[
\tbigwedge_{i\in I}S_i=\tbigcap_{i\in I}S_i, \quad \tbigvee_{i\in I}S_i=\{\tbigwedge M\mid M\subseteq\tbigcup_{i\in I} S_i\}.
\]
The least element is the sublocale $\mathsf{O}=\{1\}$ and the greatest element is the entire locale $L$.  We say that a sublocale $S$ of $L$ is \emph{proper} if $S\ne L$. Since $\SS(L)$ is a coframe, every sublocale $S$ of $L$ has a \emph{supplement} denoted by $S^{\#}$. For any $a\in L$, the sublocales
\[
\mathfrak{c}_L(a)=\newuparrow  a=\{x\in L\mid x\ge a\}\ \text{ and }\ \mathfrak{o}_L(a)=\{a\to b\mid b\in L\}
\]
are the \emph{closed} and \emph{open} sublocales of $L$, respectively (that we shall denote simply by $\mathfrak{c}(a)$ and $\mathfrak{o}(a)$ when there is no danger of confusion). For each $a\in L$, $\mathfrak{c}(a)$ and $\mathfrak{o}(a)$ are
complements of each other in $\SS(L)$
and satisfy the identities
\begin{equation*}{\label{identities.basic}}
\tbigcap_{i\in I} \mathfrak{c}(a_i)=\cl(\tbigvee_{i\in I} a_i),\quad \cl(a)\vee\cl(b)=\cl(a\wedge b),
\end{equation*}
\[\tbigvee_{i\in I}\op(a_i)=\op(\tbigvee_{i\in I} a_i) \quad\mbox{ and }\quad \op(a)\cap \op(b)=\op(a\wedge b).
\]

 The point-free separation axiom of subfitness will play an important role throughout the paper. A locale $L$ is said to be \emph{subfit} if
$$a\not\leq b\implies \exists c\in L \textrm{ such that } a\vee c=1  \ne b\vee c.$$
For spaces, subfitness is slightly weaker than $T_1$. Moreover, a locale is subfit if and only if each open sublocale is a join of closed sublocales.

\subsection{Booleanization} Given a locale $L$, we denote by  $B_L$ the subset consisting of \emph{regular elements} of $L$; that is, those $a\in L$ with $a^{**}=a$, or equivalently those $a\in L$ with $a=b^*$ for some $b\in L$. In other words, one has  $$B_L=\set{a^*\mid a\in L} =\set{a\in L\mid a^{**}=a};$$
 and this subset is called the \emph{Booleanization} of $L$. This sublocale can be characterized in several ways, e.g. it is the least dense sublocale, or equivalently, the unique Boolean dense sublocale. We recall that the join of a family $\{a_i\}_{i\in I}\subseteq B_L$ in $B_L$ is given by $(\bigvee_{i\in I}a_i)^{**}$.

\subsection{Joins of closed sublocales} \label{JCS} Let $\SS_c(L)$ denote the subset of $\SS(L)$ consisting of joins of closed sublocales i.e.
$$\SS_c(L)=\bigset{ \tbigvee_{a\in A} \cl(a)\mid A \subseteq L},$$
 endowed with the inclusion order inherited from $\SS(L)$. In  \cite{PicadoPultrTozzi2019}, Picado, Pultr and Tozzi  show (a.o.) that $\SS_c(L)$ is \emph{always} a frame which is embedded as a join-sublattice in the coframe $\SS(L)$. One of the main results from \cite{PicadoPultrTozzi2019} is that 
\begin{quote}\emph{if $L$ is subfit, and only in that case, $\SS_c(L)^{op}$ is a Boolean algebra and coincides precisely with the Booleanization of $\SS(L)^{op}$}.
\end{quote}

\section{The counterexample}\label{sec3}

\subsection{A construction of a non-spatial frame}  The following construction goes back to \cite[Example~2]{MurchistonStanley1984}, which was later used in \cite[Proposition~2.3]{PasekaSmarda1992} to produce a non-subfit Hausdorff frame (see also \cite[III~3.5]{separation}).  Let $L$ be a frame and set 
$$K(L):= \{(a,b)\in L\times B_L \mid a\leq b\}.$$
Clearly, $K(L)$ is a subframe of $L\times B_L$ (with the componentwise frame structure), and it is also closed under arbitrary meets. 
\begin{lemma}\label{mainlemma}
Let $U$ be a regular open in $\Omega(\R)$, the usual topology on the real line. Then
$$U\cup\{0\} = \tbigcap_{n\in \N} \textup{int} \left( \overline{U}\cup [-\tfrac{1}{n},\tfrac{1}{n}] \right) = \tbigcap_{n\in \N} \left(U\cup (-\tfrac{1}{n},\tfrac{1}{n})\right)^{**}.$$
Moreover, $$U=\textup{int}\left(\tbigcap_{n\in \N} \left(U\cup (-\tfrac{1}{n},\tfrac{1}{n})\right)^{**}\right)=\tbigwedge_{n\in \N} \left(U\cup (-\tfrac{1}{n},\tfrac{1}{n})\right)^{**},$$
where the meet is taken in $\Omega(\R)$, or equivalently in $B_{\Omega(\R)}$.
\end{lemma}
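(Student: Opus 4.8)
The plan is to deduce everything from the single set-theoretic identity
\[
U\cup\{0\}=\tbigcap_{n\in\N}\textup{int}\left(\overline U\cup[-\tfrac1n,\tfrac1n]\right),
\]
after which the remaining assertions are formal manipulations. Two elementary facts will be used repeatedly: the closure of a finite union is the union of the closures, so $\overline{U\cup(-\tfrac1n,\tfrac1n)}=\overline U\cup[-\tfrac1n,\tfrac1n]$; and in $\Omega(\R)$ the pseudocomplement of an open set $V$ is $\textup{int}(\R\setminus V)$, whence $V^{**}=\textup{int}(\overline V)$. Together these give $\left(U\cup(-\tfrac1n,\tfrac1n)\right)^{**}=\textup{int}\left(\overline U\cup[-\tfrac1n,\tfrac1n]\right)$ for each $n$, so the second and third expressions of the first display in the statement agree term by term; in particular their intersections coincide, and likewise the two infinitary meets appearing in the ``moreover'' part.

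First I would prove the identity above by double inclusion. For ``$\subseteq$'': if $x\in U$, then, $U$ being open and contained in $\overline U\cup[-\tfrac1n,\tfrac1n]$, one has $x\in\textup{int}\left(\overline U\cup[-\tfrac1n,\tfrac1n]\right)$ for every $n$; and $0$ lies in the open set $(-\tfrac1n,\tfrac1n)$, hence in each of these interiors. For ``$\supseteq$'': suppose $x$ belongs to all the sets $\textup{int}\left(\overline U\cup[-\tfrac1n,\tfrac1n]\right)$ but $x\ne0$. Pick $n$ with $\tfrac1n<|x|$; since $[-\tfrac1n,\tfrac1n]$ is closed and misses $x$, a small enough open interval $I\ni x$ is disjoint from it, and after shrinking further (using $x\in\textup{int}(\overline U\cup[-\tfrac1n,\tfrac1n])$) we may assume $I\subseteq\overline U\cup[-\tfrac1n,\tfrac1n]$; then $I\subseteq\overline U$, so $x\in\textup{int}(\overline U)=U$ because $U$ is regular open. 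This establishes the first display in the statement.

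For the ``moreover'' part I would apply $\textup{int}(-)$ to this equality and use that $\textup{int}(U\cup\{0\})=U$: indeed $U\subseteq\textup{int}(U\cup\{0\})$ trivially, any point of $\textup{int}(U\cup\{0\})$ other than $0$ clearly lies in $U$, and if $0\in\textup{int}(U\cup\{0\})$ then $(-\varepsilon,\varepsilon)\setminus\{0\}\subseteq U$ for some $\varepsilon>0$, forcing $0\in\textup{int}(\overline U)=U$ by regularity (so that $U\cup\{0\}=U$ in that case anyway). Since the meet in $\Omega(\R)$ of a family of open sets is the interior of their intersection, this yields $U=\textup{int}\left(\tbigcap_{n\in\N}(U\cup(-\tfrac1n,\tfrac1n))^{**}\right)=\tbigwedge_{n\in\N}(U\cup(-\tfrac1n,\tfrac1n))^{**}$. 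Finally, the meet may equivalently be formed in $B_{\Omega(\R)}$: each term $(U\cup(-\tfrac1n,\tfrac1n))^{**}$ is regular, $B_{\Omega(\R)}$ is a sublocale of $\Omega(\R)$ and hence closed under the meets of $\Omega(\R)$, and sublocale inclusions preserve meets --- equivalently, a meet $\tbigwedge a_i$ of regular elements satisfies $\tbigwedge a_i\le(\tbigwedge a_i)^{**}\le\tbigwedge a_i^{**}=\tbigwedge a_i$ and so is itself regular.

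The computation is short, and the only point requiring genuine care is the interplay between the two roles of $0$: it belongs to the raw intersection $\tbigcap_{n}\textup{int}(\overline U\cup[-\tfrac1n,\tfrac1n])$, yet it is never an interior point of $U\cup\{0\}$ unless it already lies in $U$. Both the ``$\supseteq$'' inclusion and the identity $\textup{int}(U\cup\{0\})=U$ hinge on this, and it is precisely here --- and nowhere else --- that the hypothesis that $U$ is regular open is used.
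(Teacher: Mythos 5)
Your proof is correct and follows essentially the same route as the paper: the same $\epsilon$-shrinking argument (phrased directly rather than by contradiction) to show a point $x\neq 0$ in all the interiors has a neighbourhood inside $\overline U$ and hence lies in $\textup{int}(\overline U)=U$ by regular openness, and the same closure argument to show $\textup{int}(U\cup\{0\})=U$. You additionally spell out two details the paper leaves implicit --- the term-by-term identification $(U\cup(-\tfrac1n,\tfrac1n))^{**}=\textup{int}(\overline U\cup[-\tfrac1n,\tfrac1n])$ and the agreement of the meet in $B_{\Omega(\R)}$ with that in $\Omega(\R)$ --- both of which are handled correctly.
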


\begin{proof}
The inclusion ``$\subseteq$'' is trivial so let us show the reverse one. Let  $x\not\in U$ such that $x\ne 0$. We want to show $x\not\in  \tbigcap_{n\in \N} \textup{int} \left( \overline{U}\cup [-\tfrac{1}{n},\tfrac{1}{n}] \right)$.  Since $x\ne 0$, pick an $N\in\N$ such that $\frac{1}{N}< |x|$. It then suffices to show that $x\not\in \textup{int} \left( \overline{U}\cup [-\tfrac{1}{N},\tfrac{1}{N}] \right)$.  Suppose otherwise that $x\in \textup{int} \left( \overline{U}\cup [-\tfrac{1}{N},\tfrac{1}{N}] \right)$. Then there is an $\epsilon>0$ such that \begin{equation}\label{bat}(x-\epsilon,x+\epsilon)\subseteq  \overline{U}\cup [-\tfrac{1}{N},\tfrac{1}{N}].\end{equation} Let $\epsilon'=\textup{min}\{\epsilon, |x|-\tfrac{1}{N}\}>0$. Note that then, if $t\in (x-\epsilon',x+\epsilon')$, 
one has $|x|-|t|\leq |x-t|<\epsilon'\leq |x|-\frac{1}{N}$
and so $|t|> \frac{1}{N}$, from which follows $$(x-\epsilon',x+\epsilon') \subseteq \R\smallsetminus[-\tfrac{1}{N},\tfrac{1}{N}]$$
Combining it with \eqref{bat}, we get
$$(x-\epsilon',x+\epsilon')\subseteq \overline{U}$$ because $\epsilon'\leq \epsilon$.
But, since $U$ is regular open, we have $x\not\in  \textup{int} (\overline{U})$ --- i.e. $x\in \overline{\R \smallsetminus \overline{U}}$. Hence $(x-\epsilon',x+\epsilon')\cap (\R\smallsetminus\overline{U})\neq \varnothing$. This leads to a contradiction.

Let us show the second part of the statement. By the previous part, it suffices to show that $\textup{int}(U\cup\{0\})\subseteq U$, and clearly we may assume $0\not\in U$.  Let $V$ be an open set such that $V\subseteq U\cup\{0\}$. We want to show that $V\subseteq U$ --- i.e. that $0\not\in V$. By way of contradiction, suppose that $0\in V$. Then there is an $\epsilon>0$ such that $(-\epsilon,\epsilon)\subseteq V\subseteq U\cup\{0\}$. Then $(-\epsilon,\epsilon)\smallsetminus\{0\}\subseteq U$ and taking closures $[-\epsilon,\epsilon]\subseteq \overline{U}$, and so $0\in (-\epsilon,\epsilon)\subseteq \textup{int}(\overline{U})=U$ because $U$ is regular open.
This leads to a contradiction.
\end{proof}

\begin{proposition}\label{mainprop}
In $K(\Omega(\R))$ the following properties hold:
\begin{enumerate}
\item\label{mainprop1} $(\R\smallsetminus\{0\},\R)\not\in \tbigvee_{n\in\N} \cl_{K(\Omega(\R))} \left( (-\tfrac{1}{n},\tfrac{1}{n}), (-\tfrac{1}{n},\tfrac{1}{n})  \right)$\textup;
\item\label{mainprop2} $\cl_{K(\Omega(\R))}\left((\R\smallsetminus\{0\},\R)\right) \vee  \tbigvee_{n\in\N} \cl_{K(\Omega(\R))} \left( (-\tfrac{1}{n},\tfrac{1}{n}), (-\tfrac{1}{n},\tfrac{1}{n})  \right) = K(\Omega(\R))$.
\end{enumerate}
\end{proposition}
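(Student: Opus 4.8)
Throughout I write $M:=K(\Omega(\R))$, $d:=(\R\smallsetminus\{0\},\R)$ and $c_n:=\bigl((-\tfrac1n,\tfrac1n),(-\tfrac1n,\tfrac1n)\bigr)$, which all lie in $M$. The plan is to exploit the explicit description of joins in the coframe $\SS(M)$ recalled in Section~\ref{sec2}: for sublocales $S_i$ of $M$ one has $\tbigvee_i S_i=\{\tbigwedge_M Y\mid Y\subseteq\tbigcup_i S_i\}$, where $\tbigwedge_M$ denotes meets in the frame $M$, and $\mathfrak{c}_M(c)=\{x\in M\mid x\ge c\}$ (the joins appearing in the statement may be read in $\SS(M)$, since $\SS_c(M)$ is a join-sublattice of it). The one computational ingredient is that meets in $M$ are coordinatewise: because $K(L)$ is closed under arbitrary meets in $L\times B_L$, the meet of a family $\{(x_j,y_j)\}_j$ in $M$ is $\bigl(\textup{int}\,\tbigcap_j x_j,\ \tbigwedge_j y_j\bigr)$, the first coordinate being the usual meet in $\Omega(\R)$ and the second the meet in $B_{\Omega(\R)}$; the second coordinate is the delicate one, and this is exactly where Lemma~\ref{mainlemma} will be used.

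To prove (\ref{mainprop1}) I argue by contradiction. Suppose $d=\tbigwedge_M Y$ for some $Y\subseteq\tbigcup_n\mathfrak{c}_M(c_n)$. Then $Y\ne\varnothing$, since the empty meet is the top $(\R,\R)\ne d$; and each $(x,y)\in Y$ lies above some $c_n$, so $0\in(-\tfrac1n,\tfrac1n)\subseteq x$. Comparing first coordinates, $\R\smallsetminus\{0\}\subseteq\tbigcap\{x\mid(x,y)\in Y\}$, which together with $0\in x$ forces $x=\R$ for every $(x,y)\in Y$; but then the first coordinate of $\tbigwedge_M Y$ is $\textup{int}(\R)=\R$, not $\R\smallsetminus\{0\}$ — a contradiction. (Lemma~\ref{mainlemma} is not needed for this part.)

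For (\ref{mainprop2}) I show directly that every $(a,b)\in M$ lies in the right-hand side. Fix such a pair, so that $a$ is open, $b$ is regular open and $a\subseteq b$, and set $e_n:=\bigl(a\cup(-\tfrac1n,\tfrac1n),\,(b\cup(-\tfrac1n,\tfrac1n))^{**}\bigr)$; a short check gives $e_n\in M$ and $e_n\ge c_n$, hence $e_n\in\mathfrak{c}_M(c_n)$. Computing $\tbigwedge_M\{e_n\mid n\in\N\}$ coordinatewise: the second coordinate is $\tbigwedge_n(b\cup(-\tfrac1n,\tfrac1n))^{**}=b$ by the ``Moreover'' clause of Lemma~\ref{mainlemma} applied to $U=b$, while the first coordinate is $\textup{int}\,\tbigcap_n\bigl(a\cup(-\tfrac1n,\tfrac1n)\bigr)=\textup{int}(a\cup\{0\})$. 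If $0\in a$ this equals $a$, and we are done. If $0\notin a$, I adjoin the element $d\in\mathfrak{c}_M(d)$ to the family $\{e_n\}$: the second coordinate is unchanged (as $\R$ is the top of $B_{\Omega(\R)}$), and the first coordinate becomes $\textup{int}\bigl((a\cup\{0\})\cap(\R\smallsetminus\{0\})\bigr)=\textup{int}(a)=a$. In both cases $(a,b)$ is realised as the meet in $M$ of a subset of $\mathfrak{c}_M(d)\cup\tbigcup_n\mathfrak{c}_M(c_n)$, so $(a,b)\in\mathfrak{c}_M(d)\vee\tbigvee_n\mathfrak{c}_M(c_n)$; as $(a,b)$ was arbitrary, this join equals $M$.

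The step I expect to be the crux is the second-coordinate computation in (\ref{mainprop2}): it is an infimum in the Booleanization $B_{\Omega(\R)}$, where meets are hard to control, and Lemma~\ref{mainlemma} is precisely the device that pins it down to $b$. A further, more technical, subtlety is that the obvious generators $e_n$ do not suffice on their own — when $0\notin a$ but $0\in b$ the value $\textup{int}(a\cup\{0\})$ can strictly exceed $a$ — so the closed sublocale $\mathfrak{c}_M(d)$ must be added to correct the first coordinate; and by (\ref{mainprop1}) this addition is genuinely needed, since $\tbigvee_n\mathfrak{c}_M(c_n)$ alone fails to contain $d$.
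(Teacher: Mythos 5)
Your proof is correct and follows essentially the same route as the paper: both parts rest on the explicit description of joins in $\SS(K(\Omega(\R)))$ as meets (computed componentwise in $K(\Omega(\R))$) of subsets of the union of the closed sublocales, with Lemma~\ref{mainlemma} pinning down the second-coordinate meet in $B_{\Omega(\R)}$ for part (\ref{mainprop2}). The only immaterial difference is that where you adjoin the element $(\R\smallsetminus\{0\},\R)$ to your family $\{e_n\}$ to correct the first coordinate when $0\notin a$, the paper folds $\cl(d)$ into the generators via the identity $\cl(a)\vee\cl(b)=\cl(a\wedge b)$ and uses the witnesses $\bigl(((-\tfrac1n,\tfrac1n)\smallsetminus\{0\})\cup U,\ ((-\tfrac1n,\tfrac1n)\cup V)^{**}\bigr)$, thereby avoiding your case split.
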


\begin{proof}
(1) Suppose, by way of contradiction that $$(\R\smallsetminus\{0\},\R)\in \tbigvee_{n\in\N} \cl_{K(\Omega(\R))} \left( (-\tfrac{1}{n},\tfrac{1}{n}), (-\tfrac{1}{n},\tfrac{1}{n})  \right).$$ Then there is a family $\{(U_n,V_n)\}\subseteq K(\Omega(\R))$ with $((-\tfrac{1}{n},\tfrac{1}{n}) , (-\tfrac{1}{n},\tfrac{1}{n}))\subseteq (U_n,V_n)$ and such that $(\R\smallsetminus\{0\},\R) =\tbigwedge_{n\in \N} (U_n,V_n)$. Then for any $n\in\N$ one has $\R\smallsetminus\{0\}\subseteq U_n$ and since $(-\tfrac{1}{n},\tfrac{1}{n})\subseteq U_n$ it follows that $U_n=V_n=\R$, a contradiction.\\[2mm]
(2) Note that 
\begin{align*}\cl_{K(\Omega(\R))}\left((\R\smallsetminus\{0\},\R)\right) \vee  \tbigvee_{n\in\N} \cl_{K(\Omega(\R))} \left( (-\tfrac{1}{n},\tfrac{1}{n}), (-\tfrac{1}{n},\tfrac{1}{n})  \right) &\\=   \tbigvee_{n\in\N} \cl_{K(\Omega(\R))} \left( (-\tfrac{1}{n},\tfrac{1}{n})\smallsetminus\{0\}, (-\tfrac{1}{n},\tfrac{1}{n})\right),\end{align*}
 and let $(U,V)\in K(\Omega(\R))$ --- i.e. $U\in\Omega(\R)$, $V\in B_{\Omega(\R)}$ and $U\subseteq V$. Then clearly  
 \begin{equation}\label{xxe}U=\tbigcap_{n\in \N}  ((-\tfrac{1}{n},\tfrac{1}{n})\smallsetminus\{0\})\cup U = \tbigwedge_{n\in\N}  ((-\tfrac{1}{n},\tfrac{1}{n})\smallsetminus\{0\})\cup U.\end{equation}
Hence for each $n\in \N$, set $$U_n:= ((-\tfrac{1}{n},\tfrac{1}{n})\smallsetminus\{0\})\cup U\quad \text{and} \quad V_n:=(-\tfrac{1}{n},\tfrac{1}{n})\sqcup V$$ where $\sqcup$ denotes join in $B_{\Omega(\R)}$ --- i.e. join in $\Omega(\R)$ followed by double negation. Note that for each $n\in\N$ one has $U_n\subseteq (-\tfrac{1}{n},\tfrac{1}{n})\cup U\subseteq (-\tfrac{1}{n},\tfrac{1}{n})\cup V\subseteq \left((-\tfrac{1}{n},\tfrac{1}{n})\cup V\right)^{**}= V_n$. Hence $(U_n,V_n)\in K(\Omega(\R))$ and clearly $(U_n,V_n)\in \cl_{K(\Omega(\R))} \left( (-\tfrac{1}{n},\tfrac{1}{n})\smallsetminus\{0\}, (-\tfrac{1}{n},\tfrac{1}{n})\right)$ for each $n\in\N$.
By \eqref{xxe} and Lemma~\ref{mainlemma}, one has $\tbigwedge_{n\in \N} (U_n,V_n)=(U,V)$.
\end{proof}

Recall that a frame $L$ is \emph{weakly subfit} if for all $a\ne 0$ in $L$ there is a $c\ne 1$ such that $a\vee c= 1$ (see e.g. \cite{separation,PicadoPultr2016,Arrieta2024}  for more information on this $T_1$-type separation axiom for locales).

\begin{proposition}\label{weaksubScL}
The following are equivalent for a locale $L$:\begin{enumerate}
\item $\SS_c(L)$ is weakly subfit;
\item Every proper open sublocale of $L$ is contained in a proper join of closed sublocales;
\item Every proper open dense sublocale of $L$ is contained in a proper join of closed sublocales.
\end{enumerate}
\end{proposition}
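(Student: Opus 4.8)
The plan is to reduce the weak subfitness of $\SS_c(L)$ — an a priori awkward condition, since both the order and the joins of $\SS_c(L)$ sit inside the coframe $\SS(L)$ in a subtle way — to a statement about open sublocales of $L$ alone. The bridge is the elementary observation that, for any $a\in L$ and any sublocale $T$ of $L$,
\[
\cl(a)\vee T = L \iff \op(a)\subseteq T .
\]
Indeed, $\op(a)$ is the complement of $\cl(a)$ in $\SS(L)$, and in a coframe the complement of a complemented element coincides with its supplement, so $\op(a)=\cl(a)^{\#}$; and $S^{\#}$ is by definition the least sublocale $T$ with $S\vee T=L$. I would record this as a preliminary remark, together with the following trivialities that get used repeatedly: the join of $\SS_c(L)$ agrees with that of $\SS(L)$, with bottom $\mathsf{O}$ and top $L$; $\cl(a)=\mathsf{O}$ iff $a=1$, so $\op(a)$ is a proper open sublocale exactly when $a\ne 1$; and $\op(a)\subseteq\cl(a^{*})$ always, because $a\to 0=a^{*}$ is the least element of $\op(a)=\{a\to b\mid b\in L\}$ (whence also $\op(a)$ is dense precisely when $a^{*}=0$, since its closure is $\cl(a^{*})$).

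With this dictionary, (1)$\Rightarrow$(2) is immediate: given a proper open sublocale $\op(a)$, so $a\ne 1$ and $\cl(a)\ne\mathsf{O}$, weak subfitness of $\SS_c(L)$ yields a proper $T\in\SS_c(L)$ with $\cl(a)\vee T=L$, whence $\op(a)\subseteq T$. The converse (2)$\Rightarrow$(1) runs backwards: an arbitrary nonzero $S=\bigvee_{a\in A}\cl(a)\in\SS_c(L)$ must contain some nonzero summand $\cl(a)$ (otherwise each $a\in A$ equals $1$ and $S=\mathsf{O}$); then $\op(a)$ is a proper open sublocale, by (2) it lies in some proper $T\in\SS_c(L)$, and $S\vee T\supseteq\cl(a)\vee\op(a)=L$, so $S\vee T=L$, i.e.\ $\SS_c(L)$ is weakly subfit.

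It remains to handle (2)$\Leftrightarrow$(3). One direction is trivial, since proper open dense sublocales are in particular proper open sublocales. For (3)$\Rightarrow$(2) I would argue by cases on a proper open sublocale $\op(a)$ (so $a\ne 1$): if $a^{*}=0$, then $\op(a)$ is itself a proper open dense sublocale, and (3) applies verbatim; if $a^{*}\ne 0$, then $\cl(a^{*})$ is a proper closed sublocale — in particular a proper element of $\SS_c(L)$ — and $\op(a)\subseteq\cl(a^{*})$ by the remark above, so $\op(a)$ already sits inside a proper join of closed sublocales.

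The substantive point of the whole argument is the initial equivalence $\cl(a)\vee T=L\iff\op(a)\subseteq T$: once it is isolated, everything else is bookkeeping about which $a$ render $\cl(a)$, $\op(a)$, or $\cl(a^{*})$ trivial or dense. The only other place where a little care is needed is the step in (2)$\Rightarrow$(1) that extracts a single nonzero closed summand from a general element of $\SS_c(L)$; this is where the explicit shape of joins of closed sublocales is actually used, and it is the only point in the proof that cannot be phrased for an arbitrary frame.
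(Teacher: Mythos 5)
Your proof is correct and follows essentially the same route as the paper's: your bridge $\cl(a)\vee T=L\iff\op(a)\subseteq T$ is exactly the paper's use of the supplement $S^{\#}$ of a complemented sublocale, the extraction of a non-void closed summand in (2)$\Rightarrow$(1) is the same, and your case split on density via $\cl(a^{*})$ for (3)$\Rightarrow$(2) is precisely the paper's ``take its own closure'' step. No gaps.
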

\begin{proof}
Unwinding (1) yields: for all non-void $S\in \SS_c(L)$ (i.e. $S\ne \mathsf{O}$) there is a proper $T\in \SS_c(L)$ with $S^{\#}\subseteq T$. Now taking $S$ to be a closed sublocale of $L$ gives (2). Conversely, take $S=\tbigvee_i \cl(a_i)$ non-void. Then there is an $i_0$ with $\cl(a_{i_0})$ non-void, i.e. $\op(a_{i_0})$ proper. Hence there is a proper join of closed sublocales $T$ with $\op(a_{i_0})\subseteq T$ and so $S\vee T\supseteq \cl(a_{i_0})\vee T=L$.
Finally, the equivalence (2)$\iff$(3) follows because if $\op(a)$ is not dense, then we can take its own closure (which is proper) as the required join of closed sublocales.
\end{proof}

\begin{theorem}\label{thws}
The frame $\SS_c(K(\Omega(\R)))$ is weakly subfit. 
\end{theorem}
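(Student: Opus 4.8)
The plan is to establish condition~(3) of Proposition~\ref{weaksubScL} for the locale $L=\Omega(\R)$: every proper open dense sublocale of $K(L)$ is contained in a proper join of closed sublocales. I would begin by identifying these sublocales explicitly. Arbitrary joins, finite meets and the implication in $K(L)$ are inherited componentwise from $L\times B_L$ (using $a\le b$), and a short computation gives $(a,b)^{*}=(b^{*},b^{*})$ for $(a,b)\in K(L)$. Since $\op_{K(L)}(a,b)$ is the complement of $\cl_{K(L)}(a,b)=\{(x,y)\in K(L)\mid x\ge a,\ y\ge b\}$ in $\SS(K(L))$, and an open sublocale is dense exactly when the corresponding element is dense, $\op_{K(L)}(a,b)$ is dense iff $b^{*}=0$, i.e.\ (as $b$ is regular) iff $b=1$. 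Together with properness, $(a,b)\ne(1,1)$, the proper open dense sublocales of $K(L)$ are precisely the sublocales $\op_{K(L)}(U,\R)$ with $U\in\Omega(\R)$ and $U\ne\R$.

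Next I would exploit the homogeneity of $\R$ to reduce to a single sublocale. Given $U\ne\R$, choose $p\in\R\smallsetminus U$; translation by $p$ is a homeomorphism of $\R$ and induces a frame automorphism $\phi$ of $\Omega(\R)$ with $\phi(\R)=\R$ and $\phi(U)$ a translate of $U$ not containing $0$. As $\phi$ preserves pseudocomplements it preserves $B_{L}$, so it lifts to a frame automorphism $\Phi$ of $K(L)$, which in turn induces an automorphism of the lattice $\SS(K(L))$ carrying closed sublocales to closed sublocales, open sublocales to open sublocales, and joins to joins; hence it carries proper joins of closed sublocales to proper joins of closed sublocales. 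Since $\Phi\bigl(\op_{K(L)}(U,\R)\bigr)=\op_{K(L)}(\phi(U),\R)$, it suffices to treat the case $0\notin U$.

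For such $U$ we have $U\subseteq\R\smallsetminus\{0\}$, hence $(U,\R)\le(\R\smallsetminus\{0\},\R)$ in $K(L)$, and since $\op_{K(L)}(-)$ is order-preserving, $\op_{K(L)}(U,\R)\subseteq\op_{K(L)}(\R\smallsetminus\{0\},\R)$. Write $C:=\cl_{K(L)}(\R\smallsetminus\{0\},\R)$ with complement $O:=\op_{K(L)}(\R\smallsetminus\{0\},\R)$ in $\SS(K(L))$, and set $T:=\tbigvee_{n\in\N}\cl_{K(L)}\bigl((-\tfrac1n,\tfrac1n),(-\tfrac1n,\tfrac1n)\bigr)$. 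Then $T$ is a join of closed sublocales, proper by Proposition~\ref{mainprop}(1), and $C\vee T=K(L)$ by Proposition~\ref{mainprop}(2). Since $\SS(K(L))$ is a coframe, hence a distributive lattice,
\[ O \;=\; O\cap(C\vee T) \;=\; (O\cap C)\vee(O\cap T) \;=\; \mathsf{O}\vee(O\cap T) \;=\; O\cap T \;\subseteq\; T. \]
Therefore $\op_{K(L)}(U,\R)\subseteq O\subseteq T$ with $T$ a proper join of closed sublocales, so condition~(3) of Proposition~\ref{weaksubScL} holds and the theorem follows.

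The crux of the argument---and the only genuinely delicate point---is the observation that density in $K(L)$ forces the second coordinate to be $1$, which confines every proper open dense sublocale of $K(L)$, up to the translation symmetry, below the single sublocale $\op_{K(L)}(\R\smallsetminus\{0\},\R)$; this is exactly the sublocale for which Proposition~\ref{mainprop} was engineered. Apart from the computation of $(a,b)^{*}$ in $K(L)$ and the bookkeeping needed to keep ``proper'' and ``dense'' apart, the translation reduction and the final distributivity computation are routine.
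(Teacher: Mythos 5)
Your proof is correct and follows essentially the same route as the paper: reduce via Proposition~\ref{weaksubScL} and the homogeneity of $\R$ to the single open sublocale $\op_{K(\Omega(\R))}\bigl((\R\smallsetminus\{0\},\R)\bigr)$, then apply Proposition~\ref{mainprop}. The only differences are cosmetic: you use condition~(3) with an explicit computation $(a,b)^{*}=(b^{*},b^{*})$ to isolate the dense opens, and you spell out the distributivity step $O\subseteq T$, whereas the paper uses condition~(2) and leaves that step implicit.
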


\begin{proof}
Let $(U,V)\in K(\Omega(\R))$ such that $(U,V)\ne (\R,\R)$. Then $U\ne \R$, so there is an $x_0\in \R$ with $U\subseteq \R\smallsetminus\{x_0\}$ and so $(U,V)\subseteq (\R\smallsetminus\{x_0\},\R)$. Hence  in view of Proposition~\ref{weaksubScL}\,(2) it suffices to show that the proper open sublocale $\op_{K(\Omega(\R))}\left((\R\smallsetminus\{x_0\}, \R)\right)$ is contained in a proper join of closed sublocales. Since $\R$ is a homogeneous space, it is clearly enough to show it for $x_0=0$. Let $S=\tbigvee_{n\in\N} \cl_{K(\Omega(\R))} \left( (-\tfrac{1}{n},\tfrac{1}{n}), (-\tfrac{1}{n},\tfrac{1}{n})  \right)$. By Proposition~\ref{mainprop}\,(\ref{mainprop1}), we see that $S$ is proper.  Moreover, by Proposition~\ref{mainprop}\,(\ref{mainprop2}) we see that $\op_{K(\Omega(\R))}\left((\R\smallsetminus\{0\},\R)\right)\subseteq S$, as required.
\end{proof}

The following result is well-known:

\begin{lemma}{(\cite[Proposition~6.1]{PicadoPultr2016})}\label{coframes}
In a weakly subfit frame $L$, the following formula holds for pseudocomplements
$$a^*=\tbigwedge \{ x\mid x\vee a=1\},\qquad a\in L.$$
Hence, if $L$ is additionally a coframe, then $a\vee a^*=1$ for any $a\in L$ and so $L$ is a Boolean algebra.
\end{lemma}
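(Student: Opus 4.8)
The plan is to establish the displayed identity via two inequalities and then read off the Boolean conclusion from the coframe law. Write $m:=\tbigwedge\{x\in L\mid x\vee a=1\}$; this meet is over a nonempty set, since $1$ belongs to it. The inequality $a^*\le m$ holds in any frame: if $x\vee a=1$, the distributive law gives $a^*=a^*\wedge(x\vee a)=(a^*\wedge x)\vee(a^*\wedge a)=(a^*\wedge x)\vee 0=a^*\wedge x$, so $a^*\le x$, and taking the meet over all such $x$ yields $a^*\le m$.

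The reverse inequality is where weak subfitness enters. It suffices to show $m\wedge a=0$, since then $m\le a\to 0=a^*$. Suppose $m\wedge a\ne 0$; weak subfitness provides some $c\ne 1$ with $(m\wedge a)\vee c=1$. Expanding by distributivity, $1=(m\wedge a)\vee c=(m\vee c)\wedge(a\vee c)$, so in particular $a\vee c=1$; hence $c$ lies in the set defining $m$, forcing $m\le c$ and thus $1=m\vee c=c$, contradicting $c\ne 1$. Therefore $m\wedge a=0$, so $m\le a^*$, and combining with the above, $a^*=m$.

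For the final assertion, assume $L$ is additionally a coframe and apply the dual (co)distributive law to the meet defining $a^*$: $a\vee a^*=a\vee\tbigwedge\{x\mid x\vee a=1\}=\tbigwedge\{a\vee x\mid x\vee a=1\}$, and every member of this family equals $1$, whence $a\vee a^*=1$. Together with $a\wedge a^*=0$, this exhibits $a^*$ as a complement of $a$; as $a$ was arbitrary, $L$ is a complemented bounded distributive lattice, i.e.\ a Boolean algebra. I expect the only delicate point to be choosing to apply weak subfitness to $m\wedge a$ (rather than to $a$ or $m$ directly) and then extracting $a\vee c=1$ from the distributive expansion, which is precisely what lets one invoke the minimality of $m$; the remaining steps are routine manipulations of the frame and coframe identities recalled in Section~\ref{sec2}.
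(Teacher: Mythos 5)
Your proof is correct. The paper itself gives no proof of this lemma—it is quoted with a citation to \cite[Proposition~6.1]{PicadoPultr2016}—and your argument (the easy inequality $a^*\le m$ by finite distributivity, then $m\wedge a=0$ via weak subfitness applied to $m\wedge a$, and finally the coframe distribution of $\vee$ over the defining meet) is precisely the standard one.
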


\begin{corollary}
The frame $\SS_c(K(\Omega(\R)))$ is not a coframe. 
\end{corollary}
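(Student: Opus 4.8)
The plan is to derive the failure of coframeness by combining the two results that immediately precede this corollary. The key observation is that $\SS_c(K(\Omega(\R)))$ cannot be a Boolean algebra: if it were, then by the Picado--Pultr--Tozzi theorem recalled in Subsection~\ref{JCS}, the underlying locale $K(\Omega(\R))$ would be subfit. But $K(\Omega(\R))$ is the Paseka--\v Smarda construction applied to $\Omega(\R)$, which is a Hausdorff frame that is \emph{not} subfit (this is precisely \cite[Proposition~2.3]{PasekaSmarda1992}, cited at the start of Section~\ref{sec3}), so $\SS_c(K(\Omega(\R)))^{op}$ is not Boolean, hence neither is $\SS_c(K(\Omega(\R)))$.

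With that in hand, the argument is a one-line deduction. By Theorem~\ref{thws}, the frame $L := \SS_c(K(\Omega(\R)))$ is weakly subfit. Suppose for contradiction that $L$ is also a coframe. Then Lemma~\ref{coframes} applies directly: in a weakly subfit frame that is moreover a coframe, the pseudocomplement formula $a^* = \tbigwedge\{x \mid x\vee a = 1\}$ together with the coframe distributivity forces $a\vee a^* = 1$ for every $a$, so $L$ is a Boolean algebra. This contradicts the previous paragraph, and we conclude that $L$ is not a coframe.

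The only point that needs a little care — and the step I would flag as the substantive one rather than a formality — is justifying that $K(\Omega(\R))$ fails to be subfit, since the excerpt invokes but does not prove this. One should either cite \cite[Proposition~2.3]{PasekaSmarda1992} (or \cite[III~3.5]{separation}) explicitly, or give the short direct argument: the element $(\R\smallsetminus\{0\},\R)$ of $K(\Omega(\R))$ is not below $(\varnothing,\varnothing)$ (it is not the bottom element), yet for any $(c_1,c_2)\in K(\Omega(\R))$ with $(\R\smallsetminus\{0\},\R)\vee(c_1,c_2) = (\R,\R)$ one needs $0\in c_2$, and since $c_2$ is regular this forces an open neighbourhood of $0$ into $c_2$ and, by the constraint $c_1\le c_2$ combined with how joins interact in the second coordinate, also $(\R\smallsetminus\{0\},\R)\vee(c_1,c_2)$ to already equal $(\R,\R)$ whenever $(c_1,c_2)\vee(\varnothing,\varnothing)=(\R,\R)$ fails — i.e. witnesses separating $(\R\smallsetminus\{0\},\R)$ from the bottom cannot exist, so subfitness fails. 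Either route closes the gap; invoking the cited reference is cleanest. Everything else is bookkeeping: Theorem~\ref{thws} supplies weak subfitness, Lemma~\ref{coframes} supplies the Boolean collapse, and the Picado--Pultr--Tozzi characterization supplies the contradiction.

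I expect no genuine obstacle here — the corollary is designed to be the capstone assembling Theorem~\ref{thws} and Lemma~\ref{coframes}. The one thing to double-check while writing is the direction of the order-theoretic duality: Lemma~\ref{coframes} concludes $L$ is Boolean, and a Boolean algebra is self-dual, so whether one phrases the Picado--Pultr--Tozzi result in terms of $\SS_c(L)$ or $\SS_c(L)^{op}$ is immaterial for the contradiction, but the exposition should state it consistently.
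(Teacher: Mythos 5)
Your proposal is correct and follows essentially the same route as the paper: assume $\SS_c(K(\Omega(\R)))$ is a coframe, combine Theorem~\ref{thws} with Lemma~\ref{coframes} to conclude it is Boolean, and then invoke the Picado--Pultr--Tozzi characterization to deduce subfitness of $K(\Omega(\R))$, contradicting \cite[Proposition~2.3]{PasekaSmarda1992}. The only caveat is that your sketched ``direct'' verification that $K(\Omega(\R))$ is not subfit is muddled as written (the standard witness is a pair such as $(0,v)\not\leq(0,0)$ with $v\neq 0$ regular, for which the only $c$ with $(0,v)\vee c=1$ is $c=1$); but since you ultimately defer to the cited reference, exactly as the paper does, this does not affect the argument.
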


\begin{proof}
If it were a coframe,  Lemma~\ref{coframes} and Theorem~\ref{thws} would imply the Booleanness  $\SS_c(K(\Omega(\R)))$. But this implies (see \cite[Theorem~3.5]{PicadoPultrTozzi2019}) that $K(\Omega(\R))$ is subfit, which is not (cf. \cite[Proposition~2.3]{PasekaSmarda1992}). 
\end{proof}

\subsection{What happens for spaces?}\label{subse3}
We will call a locale $L$ \emph{symmetric} if its satisfies any of the equivalent conditions of Proposition~\ref{weaksubScL}. This terminology will be justified in what follows. Recall that a topological space is said to be \emph{symmetric} if its  specialization preorder is symmetric (i.e. an equivalence relation). It is a weak separation axiom, which together with the $T_0$-axiom is equivalent to the $T_1$-property of spaces.

We end up with the observation that symmetric locales are a localic generalization of the well-known notion of symmetric space.  Given a topological space $X$, we denote by $U_c(X)$ the set of unions of closed subspaces of $X$. Clearly, $U_c(X)$ is closed under the formation of arbitrary joins in  $\mathcal{P}(X)$, and moreover it is also closed under  arbitrary meets because of the complete distributivity of $\mathcal{P}(X)$. Hence $U_c(X)$ is both a frame and a coframe, and it is anti-isomorphic to the lattice of saturated subsets of $X$ (recall that a subspace of $X$ is \emph{saturated} if it is an intersection of open subspaces of $X$).

\begin{proposition}
Let $X$ be a space. Then the following are equivalent:
\begin{enumerate}
\item $X$ is symmetric;
\item $U_c(X)$ is a Boolean algebra;
\item $U_c(X)$ is weakly subfit;
\item Every proper open subspace of $X$ is contained in a proper join of closed subspaces;
\item Every proper open dense subspace of $X$ is contained in a proper join of closed subspaces.
\end{enumerate}
\end{proposition}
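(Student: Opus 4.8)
The plan is to prove the chain of equivalences for the final proposition largely by transcribing the localic arguments into the spatial setting, where every step becomes more transparent because $\mathcal{P}(X)$ is completely distributive and $U_c(X)$ is simultaneously a frame and a coframe. First I would record the basic dictionary: for $A\subseteq X$, the closure $\overline A$ is the smallest closed set containing $A$, the saturation (upward closure in the specialization preorder) is the smallest open set containing $A$, and an element of $U_c(X)$ is exactly a union of closures of points, equivalently a set that is saturated-complemented in the sense that $X\setminus S$ is open precisely when... — more usefully, $U_c(X)$ is anti-isomorphic to the frame $\Omega_s(X)$ of saturated sets via complementation, since a union of closed sets has saturated complement and conversely every saturated set is a union of the closures of the points it omits. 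Using this anti-isomorphism, weak subfitness of $U_c(X)$ translates (via the supplement/complement in the coframe $U_c(X)$) to the statement in Proposition~\ref{weaksubScL}, giving (3)$\iff$(4)$\iff$(5) by exactly the argument of that proposition applied to the (now genuine, not merely join-sub-)coframe $U_c(X)$; the proper-dense reduction in (4)$\iff$(5) works the same way since a non-dense open subspace has proper closure.

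Next I would handle (1)$\iff$(4), the genuinely topological heart. If $X$ is symmetric, the specialization preorder is an equivalence relation, so the closure of a point equals its saturation, and more generally $\overline{\{x\}}$ is a clopen-like "equivalence class" in the sense that it is both the closure and a saturated set. Given a proper open $U\subsetneq X$, pick $x\notin U$; then $\overline{\{x\}}$ is disjoint from $U$ — here symmetry is essential: if $U$ were to meet $\overline{\{x\}}$ at some $y$, then $x$ is a specialization of $y$, hence by symmetry $y$ is a specialization of $x$, but $U$ open containing $y$ forces $x\in U$. So $X\setminus\overline{\{x\}}$ is a proper saturated set containing $U$, and its... wait — we want a proper \emph{join of closed subspaces} containing $U$; taking $S = \bigcup\{\overline{\{z\}} : z\in U\}$, the saturation of $U$, which is a union of closed sets, contains $U$ and is proper because it misses $x$ (again by symmetry, $\overline{\{z\}}\not\ni x$ for all $z\in U$). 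For the converse (4)$\Rightarrow$(1), suppose $X$ is not symmetric: there exist $x,y$ with $x\in\overline{\{y\}}$ but $y\notin\overline{\{x\}}$. Consider the open set $U = X\setminus\overline{\{x\}}$, which is proper (it omits $x$) and contains $y$. Any union of closed sets $S$ with $U\subseteq S$ must contain $\overline{\{y\}}\supseteq$ the closure of some point of $U$ landing above... — more directly, $S$ being a union of closed sets and containing $y$ means $S\supseteq \overline{\{y\}}\ni x$; but $S$ can also be forced to swallow every point of $X\setminus\overline{\{x\}}$, hence $\overline{\{w\}}\subseteq S$ for the relevant $w$, and chasing this should force $S=X$, contradicting (4). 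I expect this converse direction to require the most care: the clean statement is that $S\in U_c(X)$ with $U\subseteq S$ forces $S$ to contain the saturation-closure iteration of $U$, and non-symmetry makes that process reach all of $X$.

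Finally, (2)$\iff$(3): one direction is Lemma~\ref{coframes}, since $U_c(X)$ is always a coframe, weak subfitness plus coframeness yields Booleanness. For the converse, a Boolean algebra is trivially weakly subfit (take $c=a^*$, which is $\neq 1$ whenever $a\neq 0$). The main obstacle, as noted, is pinning down (4)$\Rightarrow$(1) — making precise the claim that failure of symmetry produces an open set whose only union-of-closed-sets overset is $X$ itself; everything else is either a routine transcription of Proposition~\ref{weaksubScL} via the coframe anti-isomorphism $U_c(X)\cong\Omega_s(X)^{op}$, or a direct appeal to Lemma~\ref{coframes}.
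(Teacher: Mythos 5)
Most of your proposal tracks the paper's proof: the equivalences (3)$\iff$(4)$\iff$(5) by transcribing the argument of Proposition~\ref{weaksubScL}, (2)$\iff$(3) via Lemma~\ref{coframes} together with the trivial converse, and (1)$\Rightarrow$(4) by taking the union of the closures of the points of $U$ (the paper first reduces to $U=X\smallsetminus\overline{\{x\}}$ and uses $\tbigcup\{\overline{\{y\}}\mid y\notin\overline{\{x\}}\}$, but it is the same idea). Two small slips there: what you construct is the smallest member of $U_c(X)$ containing $U$, not the saturation of $U$ (which is an intersection of opens); and the disjointness of $U$ from $\overline{\{x\}}$ for $x\notin U$ holds with no symmetry at all — symmetry is needed only to see that the union of closures of points of $U$ still misses $x$.

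The one genuine gap is (4)$\Rightarrow$(1), which you leave at ``chasing this should force $S=X$''. There is no iteration to chase; one step suffices. If $x\in\overline{\{y\}}$ but $y\notin\overline{\{x\}}$, put $U=X\smallsetminus\overline{\{x\}}$, a proper open set containing $y$. Any $S\in U_c(X)$ with $U\subseteq S$ is a union of closed sets containing $y$, hence $S\supseteq\overline{\{y\}}$; and $x\in\overline{\{y\}}$ gives $\overline{\{x\}}\subseteq\overline{\{y\}}\subseteq S$, so $S\supseteq U\cup\overline{\{x\}}=X$, contradicting (4). So your contrapositive route does close. The paper argues directly instead: from (4) it extracts, for each $x$, a point $z$ outside a proper $S\supseteq\tbigcup\{\overline{\{w\}}\mid w\notin\overline{\{x\}}\}$, deduces $z\in\overline{\{x\}}$ by taking $w=z$, and then, for any $y$ with $x\in\overline{\{y\}}$, uses transitivity to get $z\in\overline{\{y\}}$ and hence $y\in\overline{\{x\}}$. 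Both arguments rest on the same two facts — a union of closed sets contains the closure of each of its points, and the specialization preorder is transitive — but the step you omitted is exactly the point where non-symmetry is converted into the failure of (4), and it must be made explicit.
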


\begin{proof} Conditions (3), (4) and (5) are equivalent by a simple argument similar to that of the proof of Proposition~\ref{weaksubScL}. Moreover, (2) and (3) are equivalent by virtue of Lemma~\ref{coframes} and the fact that $U_c(X)$ is a coframe.\\[2mm]
(1)$\implies$(4): Because any proper open set is contained in one of the form $X\smallsetminus\overline{\{x\}}$, it suffices to show condition (4) for proper open sets of the form $X\smallsetminus\overline{\{x\}}$. But clearly $X\smallsetminus\overline{\{x\}}\subseteq \tbigcup\left\{ \overline{\{y\}}\mid y\not\in\overline{\{x\}}\right\}$, and moreover $x$ is not contained in the right hand side because of the symmetry axiom.\\[2mm]
(4)$\implies$(1): Let $x\in X$. Then, there is a family $\{F_i\}_{i\in I}$ of closed subspaces such that $X\smallsetminus \overline{\{x\}}\subseteq \tbigcup_i F_i\subsetneq X$. It then follows that
$$X\smallsetminus \overline{\{x\}}\subseteq \tbigcup\left\{ \overline{\{y\}}\mid y\not\in\overline{\{x\}}\right\}\subseteq  \tbigcup_i F_i \subsetneq X.$$
By properness, there is a $z\in X$ such that for any  $w\in X$, $z\in \overline{\{w\}}$ implies  $w\in \overline{\{x\}}$. By taking $w=z$ we see that $z\in\overline{\{x\}}$. Let finally $y\in X$ such that $x\in\overline{\{y\}}$, then  $z\in \overline{\{y\}}$ by transitivity, and so by taking $w=y$ it follows  $y\in \overline{\{x\}}$.
\end{proof}

\begin{remark}
In a $T_D$-space $X$, there is a perfect correspondence between subspaces of $X$ and the corresponding induced sublocales of $\Omega(X)$ --- 	see \cite{PP12}. In this setting, $X$ is a symmetric space (and hence $T_1$) if and only if the locale $\Omega(X)$ is symmetric.
\end{remark}

To sum up, we began by studying locales $L$ for which $\SS_c(L)$—which can be seen as a kind of discretization of $L$—satisfies a weak $T_1$-type condition. We have now seen that this class of locales, which we termed \emph{symmetric locales}, turns out—somewhat unexpectedly—to correspond to a genuine $T_1$-type separation condition on $L$ itself, by analogy with the topological setting (recall here that the $T_1$-spaces are precisely the symmetric $T_0$ ones).

Whether localic symmetry is equivalent to other known localic separation conditions (as studied in \cite{separation})—and in particular to $T_1$-type properties such as those examined in \cite{Arrieta2024}—remains an open question and will be explored in future work.
\subsection*{Declarations}
\subsubsection*{Funding Declaration}
The author acknowledges support from the Basque Government (grant IT1483-22 and a postdoctoral fellowship of the Basque Government, grant POS-2022-1-0015).
\subsubsection*{Competing interests} The authors have no competing interests to declare that are relevant to the content of this article.

\end{document}